\documentclass[conference]{IEEEtran}
\IEEEoverridecommandlockouts
\usepackage{cite}
\usepackage{amsmath,amssymb,amsfonts}
\usepackage{algorithmic}
\usepackage{graphicx}
\usepackage{textcomp}
\usepackage{xcolor}
\usepackage{caption}
\usepackage{subcaption}
\usepackage{pdfpages}
\usepackage{amsthm}
\usepackage{amsmath}
\usepackage{ulem}
\usepackage{comment}
\def\BibTeX{{\rm B\kern-.05em{\sc i\kern-.025em b}\kern-.08em
    T\kern-.1667em\lower.7ex\hbox{E}\kern-.125emX}}
\usepackage{booktabs}

\newtheorem{conjecture}{Conjecture}
\newtheorem{proposition}{Proposition}
\def\mysize{0.5}

\begin{document}

\title{Comparison of Proximal First-Order Primal and Primal-Dual algorithms via Performance Estimation\\
\thanks{N. Bousselmi is supported by the French Community of Belgium through a FRIA fellowship (F.R.S-FNRS). N. Pustelnik is supported by the Fondation Simone et Cino Del Duca - Institut de France.}
}

\author{\IEEEauthorblockN{Nizar Bousselmi,}
\IEEEauthorblockA{\textit{ICTEAM, UCLouvain}\\
Louvain-la-Neuve, Belgium \\
nizar.bousselmi@uclouvain.be}
\and
\IEEEauthorblockN{Nelly Pustelnik,}
\IEEEauthorblockA{\textit{CNRS, ENS de Lyon}\\
Lyon, France \\
nelly.pustelnik@ens-lyon.fr}
\and
\IEEEauthorblockN{Julien M. Hendrickx,}
\IEEEauthorblockA{\textit{ICTEAM, UCLouvain} \\
Louvain-la-Neuve, Belgium \\
julien.hendrickx@uclouvain.be}
\and
\IEEEauthorblockN{François Glineur,}
\IEEEauthorblockA{\textit{CORE/ICTEAM, UCLouvain}\\
Louvain-la-Neuve, Belgium \\
{francois.glineur@uclouvain.be}}
\and

}

\maketitle

\begin{abstract}
Selecting the fastest algorithm for a specific signal/image processing task is a challenging question. We propose an approach based on the Performance Estimation Problem framework that numerically and automatically computes the worst-case performance of a given optimization method on a class of functions. We first propose a computer-assisted analysis and comparison of several first-order primal optimization methods, namely, the gradient method, the forward-backward, Peaceman-Rachford, and  Douglas-Rachford splittings. We tighten the existing convergence results of these algorithms and extend them to new classes of functions. Our analysis is then extended and evaluated in the context of the primal-dual Chambolle-Pock and Condat-V{\~u} methods.
\end{abstract}

\begin{IEEEkeywords}
Complexity analysis, Proximal algorithms, Primal methods, Primal-dual methods
\end{IEEEkeywords}

\section{Introduction}
Many applications in signal processing and machine learning \cite{Combettes2011, bach2012optimization, Chambolle_A_2016_an,Pustelnik_N_20016_j-w-enc-eee_wav_bid} require to solve composite problems of the form 
\begin{equation}\label{eq:problem1}
    \min_{x\in \mathcal{H}} f(x) + g(x)
\end{equation}
or
\begin{equation}\label{eq:problem2}
    \min_{x\in \mathcal{H}} f(x) + h(Mx)
\end{equation}
for a finite-dimensional Hilbert space $\mathcal{H}$ and some properties of smoothness and convexity on functions $f$, $g$, or $h$ and a linear operator $M$. A large number of gradient and proximal-based algorithms have been developed to solve such minimization problems \cite{nemirovski2004prox,Combettes2011,condat2013primal, Chambolle_A_2016_an, beck2017first,condat2023proximal}. However, selecting the fastest method according to a target class of functions remains a tedious task, especially when no tight theoretical convergence rate is available.

In this work, we propose to apply the Performance Estimation Problem (PEP) framework \cite{drori2014performance,taylor2017smooth} to \eqref{eq:problem1} and \eqref{eq:problem2}. This analysis allows us to obtain the exact worst-case performance for these optimization schemes, and therefore to identify the fastest strategy for a given class of functions. \medskip

\noindent \textbf{State-of-the-art for algorithmic comparisons} --  The optimization literature is well documented in terms of design of efficient algorithms. However, quantifying the efficiency of the methods is not usually done in a unified way, which makes comparisons difficult. We often have access to the theoretical worst-case performance of a method in a given setting, which can differ from one analysis to another in terms of function classes and performance criteria. For instance, forward-backward splitting has been studied both in terms of convergence of iterate \cite{chen1997convergence} and function values \cite{beck2009gradient}, but usually, the analysis of methods
exists only for one criterion.

There are also plenty of numerical analyses and comparisons of methods on specific instances of \eqref{eq:problem1} or \eqref{eq:problem2}  that illustrate the behavior of the methods but without any theoretical generalization. For instance, one of the most standard image restoration minimization problems involves $f$ as a quadratic (least squares) term and $h = \lambda \Vert\cdot \Vert_1$ where $\lambda$ is a hyperparameter allowing a tradeoff between the data-fidelity term $f$ and the penalization $\Vert M \cdot \Vert_1$. Numerical comparisons are generally performed for a specific instance of $f$ and $\lambda$, while the performance on other problems can be affected by the Lipschitz constant of the gradient of $f$ or the value of the hyperparameter $\lambda$. 

The PEP framework has been used to derive and prove tight rates in different configurations \cite{drori2014performance,taylor2017smooth,taylor2017exact,ryu2020operator}. Currently, PEP can analyze many first-order methods on different function classes and has been recently extended \cite{bousselmi2023interpolation} to classes involving a linear operator such as in \eqref{eq:problem2}. However, the tool has not been often used to compare different methods in order to select the best one.

Preliminary theoretical studies have been proposed in  \cite{briceno2023theoretical, briceno2022convergence} providing theoretical rate comparisons between different first-order algorithms, thus allowing to establish which method is the fastest. However, this type of analysis faces two limitations: (i) there are no guarantees that these bounds are tight and (ii) such a theoretical analysis is tedious to extend to other algorithmic schemes such as primal-dual methods.\medskip

\noindent \textbf{Contributions} -- 
First, we complement the theoretical analysis in \cite{briceno2023theoretical} that provided upper bounds on the worst-case guarantees for problem \eqref{eq:problem1} with $f$ smooth and strongly convex and $g$ smooth and convex. For primal algorithms, we show the tightness of these bounds with PEP and improve them when possible. Then, we extend their results with numerically supported conjectures for additional classes of functions, namely, to the doubly strongly convex case and to problem \eqref{eq:problem2}. Finally, we use our tool on primal-dual methods.

\section{PEP analysis for the composite problem $f+g$}
We consider problem \eqref{eq:problem1} when $f$ is $\alpha^{-1}$-smooth $\rho$-strongly convex and $g$ is $\beta^{-1}$-smooth $\mu$-strongly convex with $\alpha,\beta \in ]0,\infty[$ and $\rho,\mu \in [0,\infty[$, and we seek the largest (i.e. worst case) contraction factor $r$ such that, for every $(x,y)\in \mathcal{H} \times \mathcal{H}$, 
\begin{equation}
    ||\Phi_{f,g} x - \Phi_{f,g} y || \leq r || x - y||
\end{equation}
where $\Phi_{f,g}$ denotes the output of an algorithmic iteration  to solve \eqref{eq:problem1}. The corresponding conceptual PEP consists in
\begin{equation}\label{eq:PEP}
\begin{aligned} 
\max_{x,y,f,g} \quad & ||\Phi_{f,g} x - \Phi_{f,g} y ||^2\\
\textrm{s.t.} \quad & f \in \mathcal{F}_{\rho,\alpha^{-1}}, g \in \mathcal{F}_{\mu,\beta^{-1}} \\
  & ||x - y||^2 \leq 1,
\end{aligned}
\end{equation}
where $\mathcal{F}_{\mu,L}$ denotes the class of $L$-smooth $\mu$-strongly convex functions. The optimal value of \eqref{eq:PEP} is then equal to the square worst-case contraction factor $r^2$. 

The seminal work in \cite{drori2014performance} raised the idea of computing tight bounds on the performance of a method by formulating a maximization problem over a class of functions and initial conditions and then solving it numerically. Later, it has been shown in \cite{taylor2017smooth} that the conceptual PEP can often be reformulated as a tractable convex semidefinite program for a large number of methods and function classes. One key idea is to replace the optimization over functions by an optimization over function and gradient values under the constraint that these values are consistent with an actual function in the desired class. Then, one must impose suitable conditions on these discrete variables to guarantee the existence of functions $f$ and $g$ consistent with the discrete points, which are called interpolation conditions. Finally, it is possible to lift the problem as a semidefinite program that can be solved efficiently.
In \cite{taylor2017smooth},  the authors have shown that any fixed-step first-order method (including methods with a proximal operator) on the class of smooth strongly convex functions can also be analyzed by PEP. Analyzing a new class of functions with PEP mainly requires deriving an explicit formulation of the corresponding interpolation conditions. Note that PEP allows to consider any performance criterion and initial condition given by a linear combination of the function values and the scalar products between the gradients and the iterates, e.g. $F(x_K) - F^*$, $||x_K - x^*||^2$, $||\nabla F(x_K)||^2$ where $F:=f+g$ or $F:=f+h\circ M$ and $K$ is the iteration index.

\section{First-order proximal methods to solve \eqref{eq:problem1}}

\subsection{PEP to validate the tightness of theoretical analysis}
First, we consider the cases of convex $g$ (i.e. $\mu=0$). We compare the following five optimization methods and recall for each the upper bound on the worst-case contraction factor $r$ proved in \cite{briceno2023theoretical}:
\begin{itemize}
    \item \textbf{Gradient method (GM):} Let $x \in \mathcal{H}$, $\tau\in ]0,\frac{2}{\alpha^{-1} + \beta^{-1}}[$,
    \begin{align}
        \Phi_{f,g} x & = x - \tau \left(\nabla f(x) + \nabla g(x)\right) \tag{GM}, \\
        r_\text{GM}(\tau) & \leq \max\{ |1-\tau \rho |, |1-\tau (\beta^{-1}+\alpha^{-1} ) | \}. \label{eq:r_GM}
    \end{align}

    \item \textbf{Forward-backward splitting (FBS1):} Let $x\in \mathcal{H}$, $\tau \in ]0,2\alpha[$,
    \begin{align}
        \Phi_{f,g} x & = \mathrm{prox}_{\tau g}\left( x - \tau \nabla  f(x) \right), \tag{FBS1} \\
        r_\text{FBS1}(\tau) & \leq \max\{ |1-\tau \rho|, |1--\tau \alpha^{-1}|\}. \label{eq:r_FBS1}
    \end{align}

    \item \textbf{Forward-backward splitting (FBS2):} Let $x\in \mathcal{H}$, $\tau \in ]0,2\beta]$,
    \begin{align}
        \Phi_{f,g} x & = \mathrm{prox}_{\tau f}\left( x - \tau \nabla  g(x) \right), \tag{FBS2} \\
        r_\text{FBS2}(\tau) & \leq \frac{1}{1+\tau \rho}. \label{eq:r_FBS2}
    \end{align}

    \item \textbf{Peaceman-Rachford splitting (PRS):} Let $x\in \mathcal{H}$, $\tau > 0$,
    \begin{equation}\tag{PRS}
        \begin{cases}
            y & = \mathrm{prox}_{\tau f}(x) \\
            \Phi_{f,g} x & = 2 \mathrm{prox}_{\tau g}(2y-x) - 2y + x 
        \end{cases}
    \end{equation}
    \begin{equation}\label{eq:r_PRS}
        r_\text{PRS}(\tau) =\max\limits_{a\in\{\rho,\alpha^{-1}\}} \frac{|1-\tau a|}{1+\tau a}.
    \end{equation}
    \item \textbf{Douglas-Rachford splitting (DRS):} Let $x\in \mathcal{H}$, $\tau > 0$,
    \begin{equation}\tag{DRS}
        \begin{cases}
            y & = \mathrm{prox}_{\tau f}(x) \\
            \Phi_{f,g}x & = \mathrm{prox}_{\tau g}(2y-x) - y + x
        \end{cases}
    \end{equation}
  \begin{align}\label{eq:r_DRS}
  \begin{split}
       r_\text{DRS}(\tau) \leq \min \left \{ \frac{1+r_\text{PRS}}{2},  \frac{1+\tau^2 \rho \beta^{-1}}{(1+\tau \rho)(1+\tau \beta^{-1})}  \right\}. 
   \end{split}
    \end{align}
\end{itemize}
Figure \ref{fig:fig1} compares the above theoretical rates and those obtained with the PEP analysis described in \eqref{eq:PEP}. The results are displayed for three selected configurations of parameters $(\alpha, \beta,\rho)$, but are representative of a large number of additional experiments, which are not displayed here.   

The numerical results obtained by PEP exactly match the above upper bounds for the first four methods (i.e. GM, FBS1, FBS2, PRS). Moreover, inspection of the PEP results allowed to identify simple, explicit functions that attain these bounds, allowing us to formally establish their tightness.
\begin{proposition}
    Each upper bound \eqref{eq:r_GM}, \eqref{eq:r_FBS1}, \eqref{eq:r_FBS2}, and \eqref{eq:r_PRS} is attained by a quadratic function, and hence describes the exact (unimprovable) worst-case contraction factors of (GM), (FBS1), (FBS2), and (PRS).
    \begin{proof}
        The quadratic functions are $f(x) = a \frac{x^2}{2}$ and $g(x) = b \frac{x^2}{2}$ with $a\in \{ \rho, \alpha^{-1} \}$ and $b\in \{0, \beta^{-1} \}$.
    \end{proof}
\end{proposition}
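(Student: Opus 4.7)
The plan is to verify each upper bound by direct substitution of the proposed quadratic functions $f(x) = ax^2/2$ and $g(x) = bx^2/2$ into the corresponding iteration. The key structural observation is that for such quadratics, both $\nabla f(x) = ax$ and $\mathrm{prox}_{\tau f}(x) = x/(1+\tau a)$ are linear in $x$, so for each of the four methods $\Phi_{f,g}$ reduces to a scalar multiplication $\Phi_{f,g} x = C(\tau,a,b)\, x$. Consequently, the worst-case contraction factor over pairs $(x,y)$ equals exactly $|C(\tau,a,b)|$ (take $\mathcal{H}=\mathbb{R}$ and apply the map to $x-y$), so the problem reduces to computing $C$ and matching its absolute value to each term appearing in the upper bounds \eqref{eq:r_GM}--\eqref{eq:r_PRS}.

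First I would check feasibility: $f(x) = ax^2/2 \in \mathcal{F}_{\rho,\alpha^{-1}}$ iff $\rho \leq a \leq \alpha^{-1}$, so $a \in \{\rho, \alpha^{-1}\}$ is admissible; since we are in the convex case $\mu = 0$, both $b = 0$ and $b = \beta^{-1}$ are admissible for $g$. A direct computation then yields $C_{\text{GM}} = 1 - \tau(a+b)$, $C_{\text{FBS1}} = (1-\tau a)/(1+\tau b)$, $C_{\text{FBS2}} = (1-\tau b)/(1+\tau a)$, and, after simplifying $2\,\mathrm{prox}_{\tau g}(2\,\mathrm{prox}_{\tau f}(x) - x) - 2\,\mathrm{prox}_{\tau f}(x) + x$, $C_{\text{PRS}} = (1-\tau a)(1-\tau b)/\bigl((1+\tau a)(1+\tau b)\bigr)$.

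Matching each term of the max/min in the upper bounds to a specific $(a,b)$ choice finishes the proof. For (GM), the choices $(a,b) = (\rho,0)$ and $(a,b) = (\alpha^{-1},\beta^{-1})$ recover the two arguments of the max in \eqref{eq:r_GM}. For (FBS1), taking $b=0$ and $a \in \{\rho,\alpha^{-1}\}$ recovers both arguments of \eqref{eq:r_FBS1}; for (FBS2), the choice $(a,b)=(\rho,0)$ directly gives $1/(1+\tau\rho)$; and for (PRS), taking $b=0$ and $a \in \{\rho,\alpha^{-1}\}$ matches both arguments of \eqref{eq:r_PRS}. Since each listed upper bound is achieved by an admissible $(f,g)$, it is tight, and since it was known to be an upper bound, it equals the exact worst-case contraction factor.

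I do not anticipate any serious obstacle: once the linearity of $\Phi_{f,g}$ on scalar quadratics is noted, the proof is a routine verification. The only mildly delicate step is the algebraic simplification of $C_{\text{PRS}}$, but it is short; and one might briefly remark that the one-dimensional construction is valid because the PEP formulation \eqref{eq:PEP} places no restriction on the dimension of $\mathcal{H}$.
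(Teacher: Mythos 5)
Your proposal is correct and follows essentially the same route as the paper: the paper's proof simply exhibits the same quadratic pairs $f(x)=a\tfrac{x^2}{2}$, $g(x)=b\tfrac{x^2}{2}$ with $a\in\{\rho,\alpha^{-1}\}$, $b\in\{0,\beta^{-1}\}$, relying on the fact (spelled out in Section \ref{sect:quad}) that each iteration then reduces to a scalar multiplication whose factor matches the corresponding bound. Your write-up merely fills in the feasibility check and the explicit factors $C_{\text{GM}}$, $C_{\text{FBS1}}$, $C_{\text{FBS2}}$, $C_{\text{PRS}}$, all of which are computed correctly.
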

For the fifth method, DRS, Figure \ref{fig:fig1} shows that the theoretical bound \eqref{eq:r_DRS} is conservative and can thus be improved. A direct practical consequence of this conservatism is that it may lead us to make sub-optimal choices of methods. For instance, in Figure~\ref{fig:fig1b}, the upper bounds obtained by \cite{briceno2023theoretical} suggested to use PRS with the step-size $\tau=1$ (black circle). However, DRS behaves much better than predicted by the existing bound in this case. Indeed, given the exact results provided by PEP among all methods, DRS achieves a better convergence rate when considering a step-size $\tau=3.3$ (black square in Figure~\ref{fig:fig1b}). The configuration displayed in Figure~\ref{fig:fig1b} clearly illustrates the drawback of relying on a non-tight analysis of the methods.

\begin{figure}
     \centering
     \begin{subfigure}[b]{\mysize\textwidth}
         \centering
         \includegraphics[width=1\textwidth]{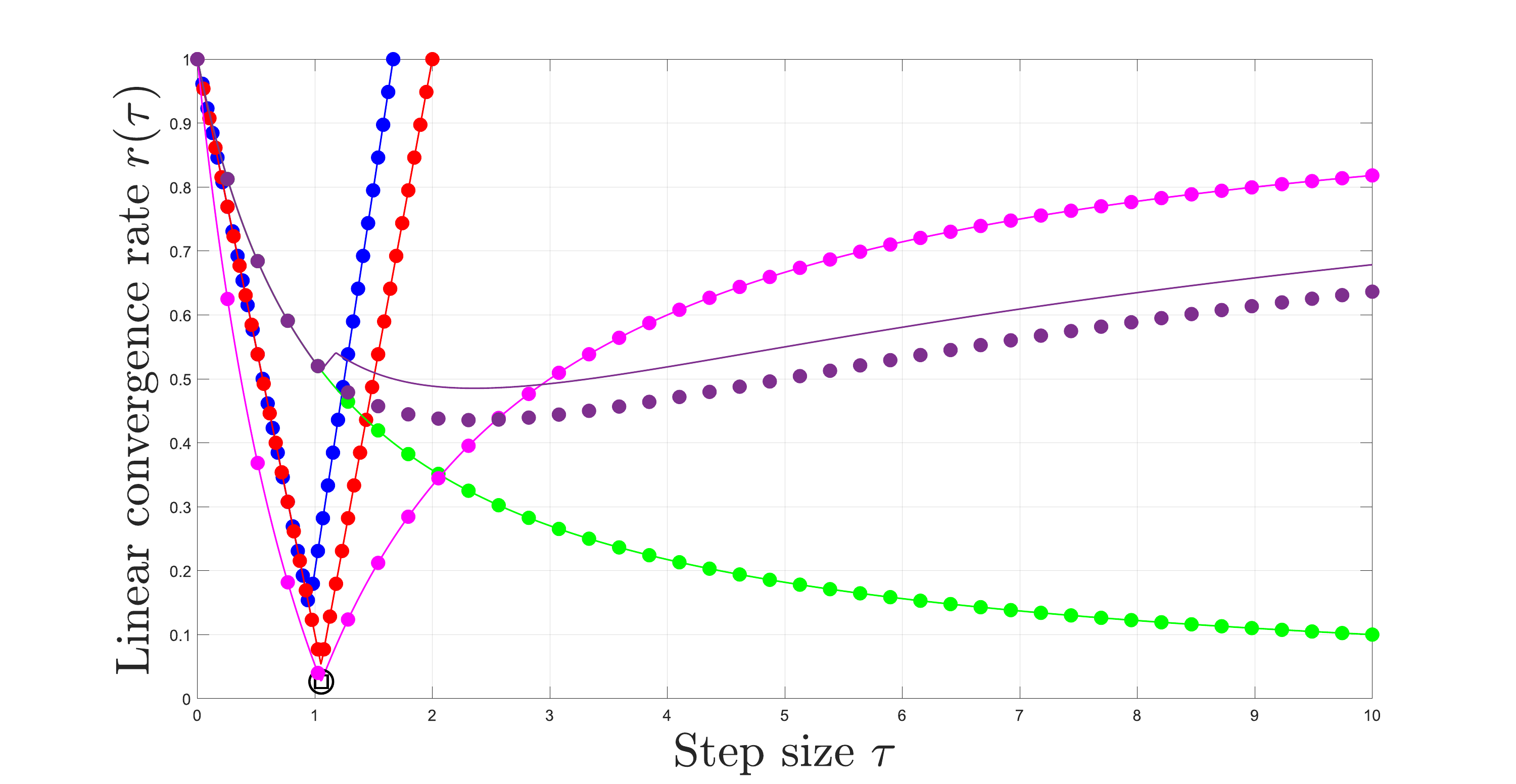}
        \caption{$\alpha=1$, $\beta = 5$, $\rho = 0.9$, $\mu = 0$.}
        \label{fig:fig1a}
     \end{subfigure}
     \hfill
     \begin{subfigure}[b]{\mysize\textwidth}
         \centering
         \includegraphics[width=1\textwidth]{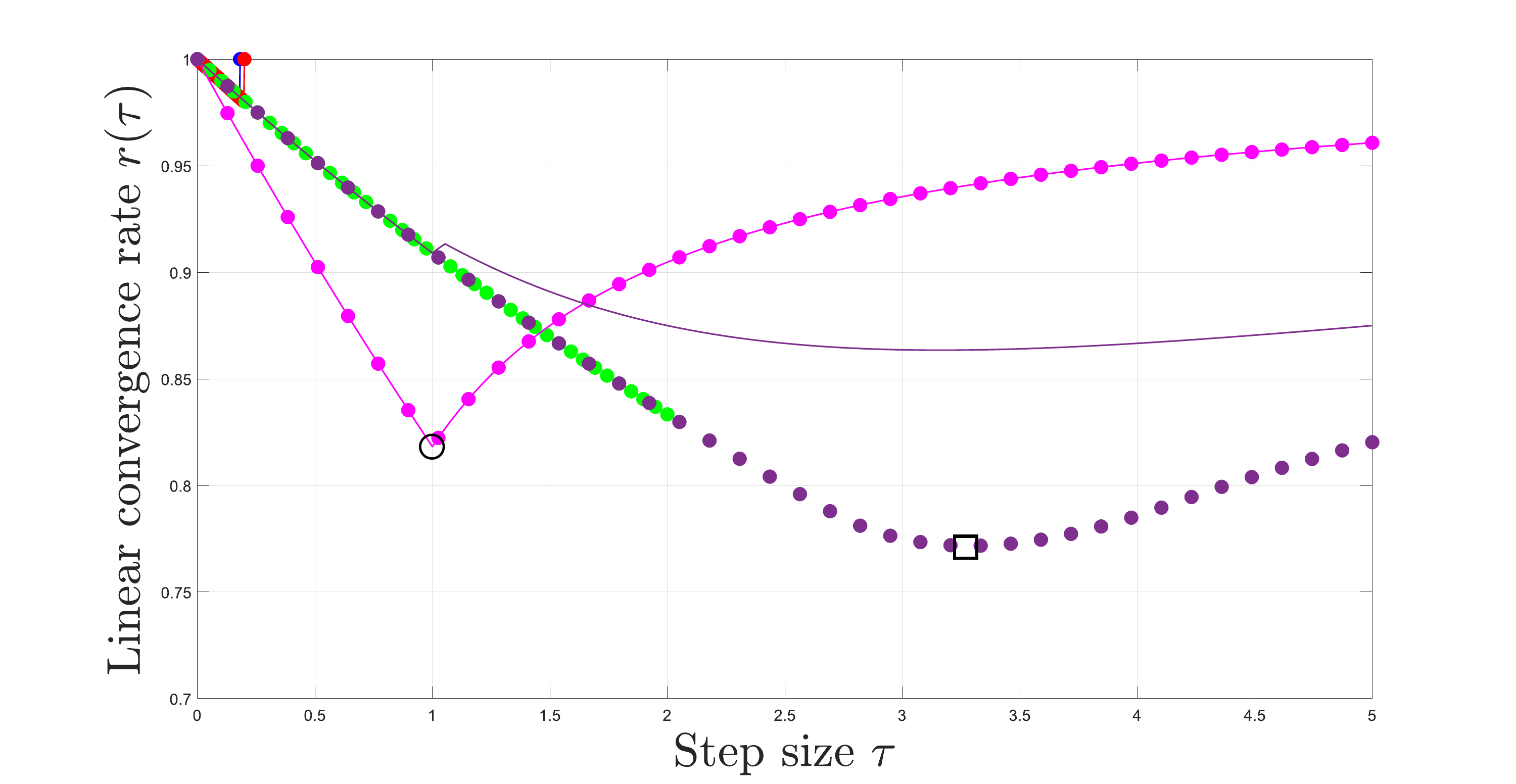}
        \caption{$\alpha=0.1$, $\beta = 1$, $\rho = 0.1$, $\mu = 0$.}
        \label{fig:fig1b}
     \end{subfigure}
     \hfill
      \begin{subfigure}[b]{\mysize\textwidth}
         \centering
         \includegraphics[width=1\textwidth]{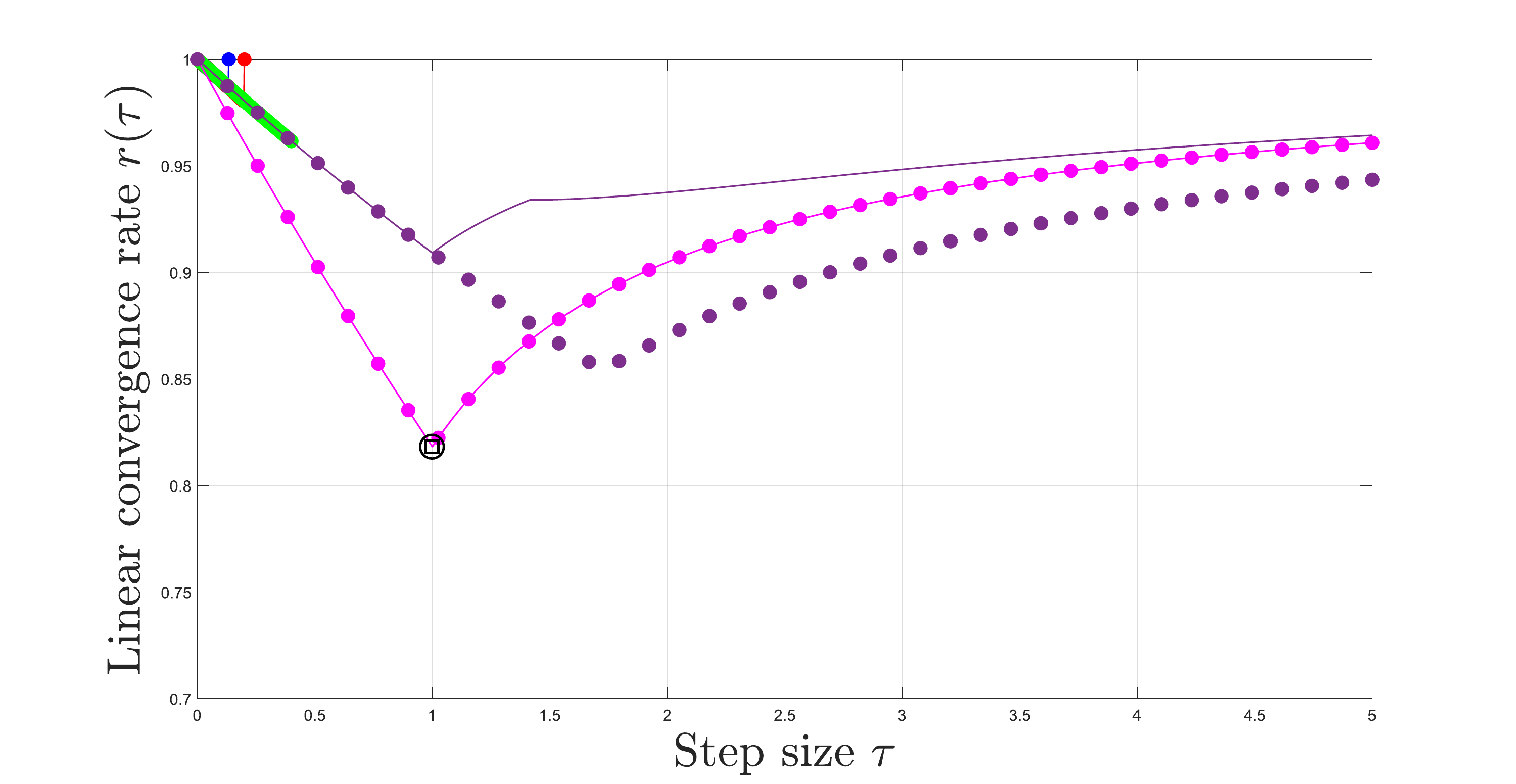}
        \caption{$\alpha=0.1$, $\beta = 0.2$, $\rho = 0.1$, $\mu = 0$.}
        \label{fig:fig1c}
     \end{subfigure}
     \setlength{\abovecaptionskip}{-5pt}
     \setlength{\belowcaptionskip}{-15pt}
    \caption{Comparison of the contraction factors from \cite{briceno2023theoretical} (solid lines) and PEP (dots) of GM (blue), FBS1 (red), FBS2 (green), PRS (magenta), DRS (purple), and the optimal rate predicted by \cite{briceno2023theoretical} (black circle) and PEP (black square).
    }
    \label{fig:fig1}
\end{figure}

\subsection{PEP to evaluate the impact of changing the regularization parameter in the objective function}
Knowing the exact worst-case contraction factor of the methods also allows us to understand what is happening when changing the regularizing hyperparameter in $f + \lambda h$, since it can be seen as an instance of our problem with $g = \lambda h$. The value of $\lambda$ directly impacts the smoothness parameter of $g$. For instance, between Figure \ref{fig:fig1b} and \ref{fig:fig1c}, parameter $\beta$ is divided by 5, which happens when hyperparameter $\lambda$ is multiplied by 5. We observe that, as expected, the behavior of FBS2, PRS, and DRS are impacted, and, considering the PEP results, the optimal method is DRS when $\beta=1$ and PRS when $\beta=5$.

\subsection{Focus on Douglas-Rachford}
Following our observation of non-tightness for DRS, we now investigate this method in more detail.
As illustrated in Figure \ref{fig:fig2}, the theoretical analysis in \cite{briceno2023theoretical} finds one of the regimes (displayed in blue) of performance for shorter step sizes. PEP allows us to identify an additional regime (displayed in red) for larger step sizes. Moreover, by matching the numerical results and the known analytical performance of the method on simple functions (see Section \ref{sect:quad}), we find that the worst-case contraction factor to be given by $\frac{1+\tau^2 \alpha^{-1} \beta^{-1}}{(1+\tau \alpha^{-1})(1+\tau \beta^{-1})}$. However, the precise behavior of the contraction factor in the middle range (cf. Figure \ref{fig:fig2} around $\tau = 7$) remains unidentified.
\begin{proposition}
    The exact worst-case contraction factor $r_\text{DRS}$ of DRS satisfies \vspace{-0.2cm}
    \begin{equation}
        r_\text{DRS}(\tau) \geq \max\Biggl\{\max_{\substack{a\in\{\rho,\alpha^{-1}\} \\ b\in\{0, \beta^{-1} \} }} \frac{1+\tau^2 ab}{(1+\tau a)(1+\tau b)}, T_0(\tau) \Biggr\}
    \end{equation}
    where $T_0(\tau)$ is unidentified.
    \begin{proof}
        The bound is reached by quadratic functions $f(x) = a \frac{x^2}{2}$, $g(x) = b \frac{x^2}{2}$ with $a\in \{ \rho, \alpha^{-1} \}$, $b\in \{0, \beta^{-1} \}$ and  unidentified functions $f(x) = s_0(x)$ and $g(x) = t_0(x)$ such that $||\phi_{s_0,t_0}x - \phi_{s_0,t_0}y || = T_0(\tau) ||x-y||$ for some $x,y$.
    \end{proof}
\end{proposition}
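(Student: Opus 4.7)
The plan is to exhibit explicit pairs $(f,g)$ together with inputs $(x,y)$ that realize each term in the lower bound; by definition of $r_{\text{DRS}}(\tau)$ as the supremum of the contraction ratio over all admissible instances, each achieved value is automatically a lower bound.

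For the first (closed-form) term, I would work on $\mathcal{H} = \mathbb{R}$ with $f(x) = a x^2/2$ and $g(x) = b x^2/2$, where $\mathrm{prox}_{\tau f}(u) = u/(1+\tau a)$ and $\mathrm{prox}_{\tau g}(u) = u/(1+\tau b)$. Plugging into the DRS definition, I would compute $y = x/(1+\tau a)$, then $2y - x = (1-\tau a)x/(1+\tau a)$, and finally, after combining the three terms of $\Phi_{f,g}x$ over the common denominator $(1+\tau a)(1+\tau b)$, the numerator simplifies (the linear-in-$\tau$ terms cancel) to $1 + \tau^2 ab$. Thus
\begin{equation*}
\Phi_{f,g}\, x = \frac{1 + \tau^2 a b}{(1+\tau a)(1+\tau b)}\, x,
\end{equation*}
so for any $x \neq y$ the ratio $\|\Phi_{f,g}x - \Phi_{f,g}y\|/\|x-y\|$ equals exactly $(1+\tau^2ab)/((1+\tau a)(1+\tau b))$.

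Next I would check admissibility: the quadratic $a x^2/2$ belongs to $\mathcal{F}_{\rho,\alpha^{-1}}$ iff $\rho \leq a \leq \alpha^{-1}$, so the choices $a \in \{\rho, \alpha^{-1}\}$ are valid; analogously $b \in \{0, \beta^{-1}\}$ makes $b x^2/2 \in \mathcal{F}_{0,\beta^{-1}}$. Taking the maximum of the achieved contraction ratios over these four combinations yields the first term inside the outer $\max$.

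For the second term, the lower bound is justified by the PEP computation itself: since the PEP semidefinite program of Section~2 is solved to optimality and its optimal value provides a valid contraction ratio attained by some admissible $(s_0, t_0) \in \mathcal{F}_{\rho,\alpha^{-1}} \times \mathcal{F}_{0,\beta^{-1}}$ and some $(x,y)$ (via the interpolation mechanism of \cite{taylor2017smooth}), there exists an instance attaining $T_0(\tau)$. Combining both sources of worst-case instances and taking the maximum gives the stated inequality. The only real obstacle in this proof is the algebraic cancellation that produces the clean form $1+\tau^2 ab$ on quadratics; beyond that the argument is just ``admissible instance $\Rightarrow$ lower bound,'' and identifying $T_0$ analytically (not claimed here) would be the genuinely hard problem.
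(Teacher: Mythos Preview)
Your proposal is correct and follows essentially the same approach as the paper: you exhibit the quadratic instances $f(x)=ax^2/2$, $g(x)=bx^2/2$ that achieve the explicit term (with the algebra spelled out, which the paper defers to its Section on quadratic functions), and you invoke the PEP-produced interpolated instance for the unidentified $T_0(\tau)$ term. The paper's proof is just a terser version of what you wrote.
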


\begin{figure}
     \centering
     \includegraphics[width=\mysize\textwidth]{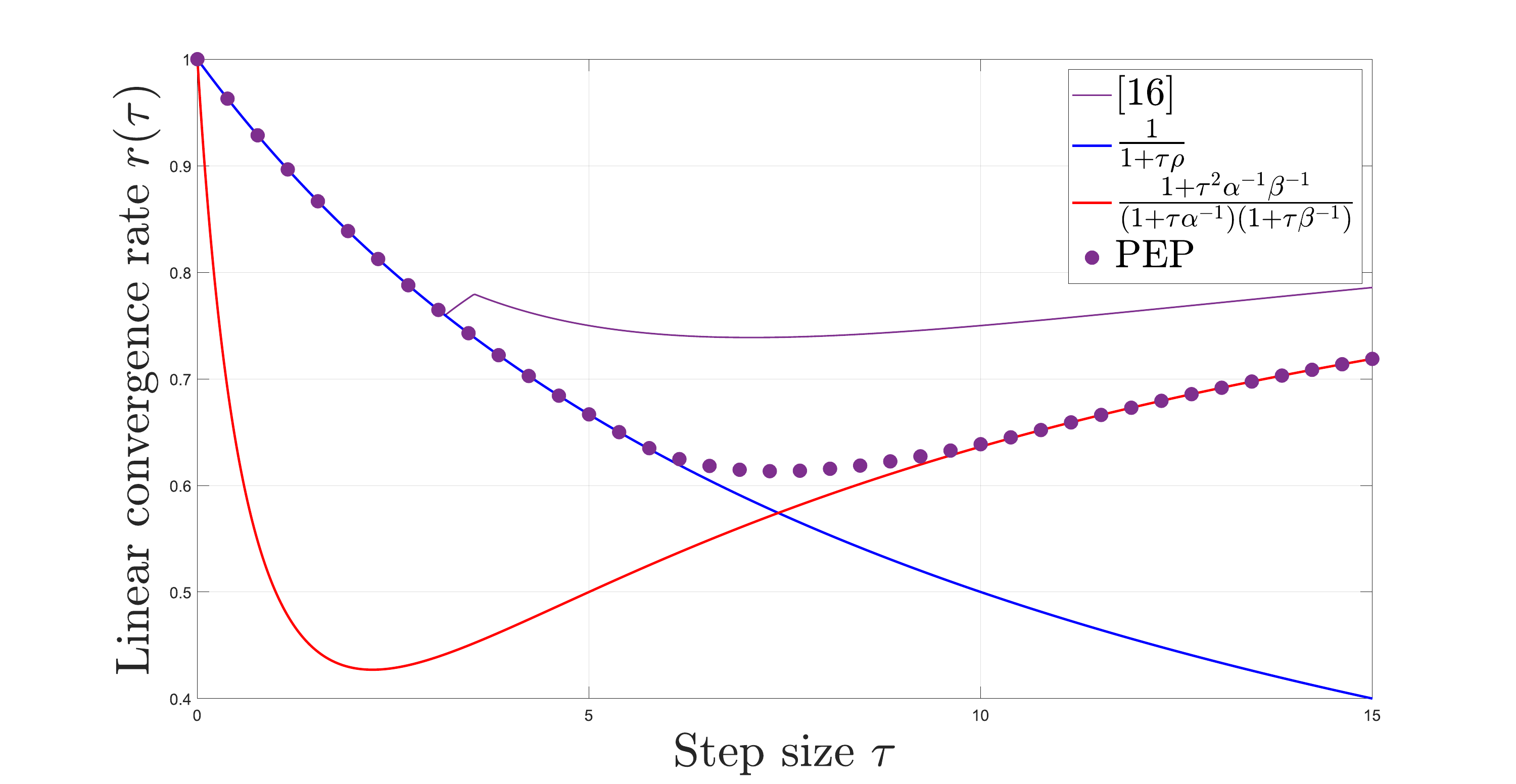}
     \setlength{\belowcaptionskip}{-16pt}
     \setlength{\abovecaptionskip}{-7pt}
    \caption{Worst-case contraction factor of DRS for $\alpha=1$, $\beta = 5$, $\rho = 0.1$, $\mu=0$ from \cite{briceno2023theoretical} (purple solid line) and PEP (purple dots). We identified two regimes (blue and red solid lines).}
    \label{fig:fig2}
 \end{figure}

\subsection{Generalization to strongly convex $f$ and $g$}
Based on PEP analysis, we extend the algorithmic comparison proposed in \cite{briceno2023theoretical} to the case where $g$ is also strongly convex (cf. Figure \ref{fig:fig3}) which was not covered in \cite{briceno2023theoretical}. Similarly to the simply convex case $\mu = 0$, we prove in this setting a lower bound on the worst-case contraction factor for the five methods.  We summarize the identified analytical expressions in the following proposition.
\begin{proposition}\label{prop:proposition_strongly}
    The exact worst-case contraction factor of GM, FBS1, FBS2, PRS, and DRS respectively satisfy
    \begin{align}
        r_\text{GM}(\tau) & \geq \max\{ |1-\tau (\rho + \mu) |, |1-\tau (\beta^{-1}+\alpha^{-1} ) | \}, \\
        r_\text{FBS1}(\tau) & \geq \frac{1}{1+\tau \mu}\max\{ |1-\tau \rho|, |1-\tau \alpha^{-1}|\}, \\
        r_\text{FBS2}(\tau) & \geq\frac{1}{1+\tau \rho}\max\{ |1-\tau \mu|, |1-\tau \beta^{-1}|\},
        \end{align}
        \begin{align}
        r_\text{PRS}(\tau) & \geq\max_{\substack{a\in\{\rho,\alpha^{-1}\} \\ b\in\{\mu, \beta^{-1} \} }} \frac{|1-\tau a|}{1+\tau a} \frac{|1-\tau b|}{1+\tau b}, \\
        r_\text{DRS}(\tau) & \geq \max\left\{\max_{\substack{a\in\{\rho,\alpha^{-1}\} \\ b\in\{\mu, \beta^{-1} \} }} \frac{1+\tau^2 ab}{(1+\tau a)(1+\tau b)}, T_\mu(\tau) \right\}.
    \end{align}
     where $T_\mu$ is unidentified.
     \end{proposition}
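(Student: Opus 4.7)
The plan is to exhibit, for each of the five algorithms and each term appearing inside the max, an explicit pair $(f,g)\in\mathcal{F}_{\rho,\alpha^{-1}}\times\mathcal{F}_{\mu,\beta^{-1}}$ and points $x,y\in\mathcal{H}$ that attain the claimed contraction ratio. Since any lower bound on a worst-case quantity only requires producing witnesses, this reduces the statement to a finite collection of direct verifications. Following the template of the preceding proposition (the $\mu=0$ case), the natural candidates are one-dimensional quadratics $f(x)=\tfrac{a}{2}x^2$ and $g(x)=\tfrac{b}{2}x^2$ with $a\in\{\rho,\alpha^{-1}\}$ and, crucially for the doubly strongly convex setting, $b\in\{\mu,\beta^{-1}\}$; all such pairs lie in the required classes.

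For such quadratics everything is explicit: $\nabla f(x)=ax$, $\nabla g(x)=bx$, $\mathrm{prox}_{\tau f}(x)=x/(1+\tau a)$, and $\mathrm{prox}_{\tau g}(x)=x/(1+\tau b)$. Substituting into each iteration yields a scalar linear map $\Phi_{f,g}x=c(a,b,\tau)\,x$, so that $\|\Phi_{f,g}x-\Phi_{f,g}y\|=|c(a,b,\tau)|\,\|x-y\|$ exactly for any $x\neq y$, giving $r_\text{algo}(\tau)\geq |c(a,b,\tau)|$. A short calculation gives
\[ c_\text{GM}=1-\tau(a+b), \quad c_\text{FBS1}=\frac{1-\tau a}{1+\tau b}, \quad c_\text{FBS2}=\frac{1-\tau b}{1+\tau a}, \]
\[ c_\text{PRS}=\frac{(1-\tau a)(1-\tau b)}{(1+\tau a)(1+\tau b)}, \quad c_\text{DRS}=\frac{1+\tau^2 ab}{(1+\tau a)(1+\tau b)}. \]

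Taking the supremum of $|c(a,b,\tau)|$ over the admissible corners recovers each displayed bound. For GM, the two diagonal corners $(\rho,\mu)$ and $(\alpha^{-1},\beta^{-1})$ attain the extreme values of $a+b$ and hence dominate the mixed corners. For FBS1 the denominator is minimized at $b=\mu$, reducing the maximization to $a\in\{\rho,\alpha^{-1}\}$; symmetrically $a=\rho$ is optimal for FBS2. For PRS and DRS all four corners in $\{\rho,\alpha^{-1}\}\times\{\mu,\beta^{-1}\}$ are relevant and the max is genuinely two-dimensional.

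The one piece that no quadratic witnesses is the term $T_\mu(\tau)$ inside the DRS bound. As already observed in the $\mu=0$ analogue, PEP produces, in a middle range of step sizes, a contraction factor strictly above the quadratic corner bound, achieved by a pair $(s_\mu,t_\mu)\in\mathcal{F}_{\rho,\alpha^{-1}}\times\mathcal{F}_{\mu,\beta^{-1}}$ extracted numerically but not available in closed form. The proof would therefore merely record the existence of such a witness and the corresponding inequality $r_\text{DRS}(\tau)\geq T_\mu(\tau)$, as is done for $T_0$ in the preceding proposition. Identifying these extremal functions analytically, so as to upgrade the lower bound into an equality for DRS, is the clear main obstacle and is left open by the authors.
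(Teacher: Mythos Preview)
Your proposal is correct and follows essentially the same approach as the paper: the paper's proof simply states that all the bounds are reached by the quadratic functions $f(x)=a\tfrac{x^2}{2}$, $g(x)=b\tfrac{x^2}{2}$ with $a\in\{\rho,\alpha^{-1}\}$, $b\in\{\mu,\beta^{-1}\}$, together with, for DRS only, unidentified functions $s_\mu,t_\mu$ witnessing $T_\mu(\tau)$. You supply more explicit detail (the closed-form $c(a,b,\tau)$ for each method and the argument about which corners dominate), but the underlying idea is identical.
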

     \begin{proof}
         All the bounds are reached by quadratic functions $f(x) = a \frac{x^2}{2}$, $g(x) = b \frac{x^2}{2}$ with $a\in \{ \rho, \alpha^{-1} \}$, $b\in \{\mu, \beta^{-1} \}$ and, for DRS only, unidentified functions $f(x) = s_\mu(x)$, $g(x)=t_\mu(x)$ such that $||\phi_{s_\mu,t_\mu}x - \phi_{s_\mu,t_\mu}y || = T_\mu(\tau) ||x-y||$ for some $x,y$.
     \end{proof}

The numerical results of all our experiments were consistent with the contraction factors used as lower bounds in Proposition \ref{prop:proposition_strongly}. Therefore, we conjecture that they are exact. As for the case $\mu=0$, we were not able to identify the regimes $T_\mu$ of DRS nor the associated worst-case functions $s_\mu$ and $t_\mu$.

\begin{figure}
     \centering
     \includegraphics[width=\mysize\textwidth]{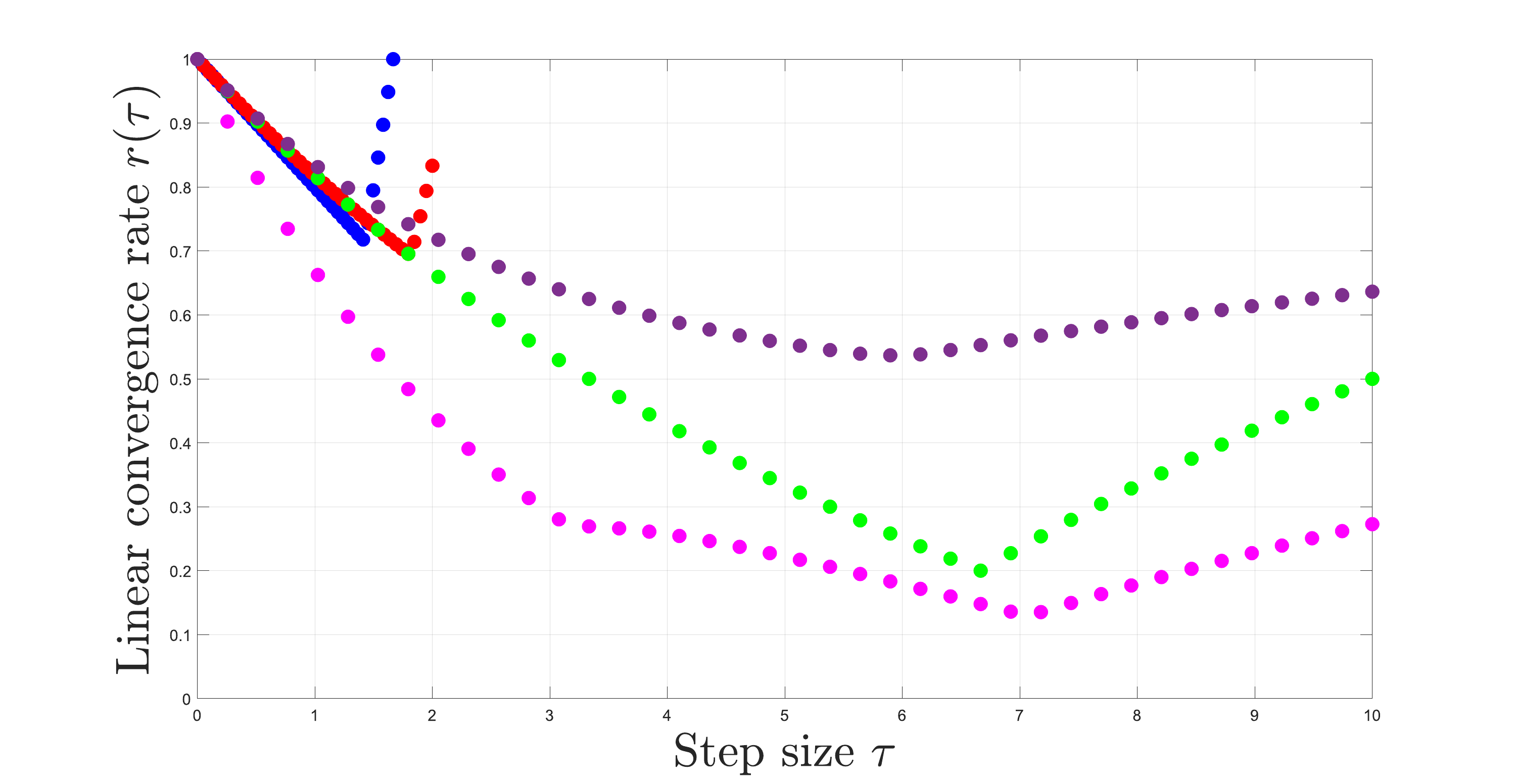}
    \caption{Contraction factors obtained by PEP of GM (blue), FBS1 (red), FBS2 (green), PRS (magenta), DRS (purple) in the doubly strongly convex case $\alpha = 1$, $\beta = 5$, $\rho=\mu=0.1$.
    }
    \label{fig:fig3}
\end{figure}

\section{Linear rate on quadratic functions}\label{sect:quad}
Interestingly, all worst cases identified so far are attained by univariate quadratic functions. The contraction factor of a method on the class of quadratic functions can be explicitly computed by solving a simple maximization problem.

For example, let us consider PRS applied to the problem $f(x)= a\frac{x^2}{2}$ and $ g(x) = b \frac{x^2}{2}$. The proximal step on a quadratic function $f(x) = a \frac{x^2}{2}$ can be explicitly computed as
\begin{align}
     \mathrm{prox}_{\tau f}(x)  = \frac{x}{1+\tau a}.
\end{align}
Therefore, after simplifications, PRS becomes
\begin{equation}
    \Phi_{f,g} x = \frac{1-\tau a}{1+\tau a} \frac{1- \tau b}{1+\tau b}x.
\end{equation}
In this case, problem \eqref{eq:PEP} computing the worst-case contraction factor reduces to 
\begin{equation}
    r^2(\tau) = \max_{\substack{a\in[\rho,\alpha^{-1}] \\ b\in[\mu, \beta^{-1} ] }} \left|\frac{1-\tau a}{1+\tau a} \frac{1- \tau b}{1+\tau b} \right|^2,
\end{equation}
namely, a simple optimization problem involving two variables $a$ and $b$ that can often be solved analytically. The same methodology applies to the other methods and also to general quadratic functions (not necessarily univariate).

This methodology allowed us to find the analytical expressions of all the contraction factors gathered in Proposition \ref{prop:proposition_strongly} (except the unidentified one of DRS). Nonetheless, it is important to note that it was not known a priori that the worst-case contraction factors of the methods over general smooth convex functions are attained by quadratic functions. The analysis and numerical results provided by PEP are critical to reach this conclusion.
\begin{conjecture}
    The worst-case contraction factor of GM, FBS1, FBS2, and PRS on the problem $\min_x f(x) + g(x)$ where $f\in\mathcal{F}_{\rho,\alpha^{-1}}$ and $g\in \mathcal{F}_{\mu,\beta^{-1}}$ are reached by univariate quadratic functions $f$ and $g$.
\end{conjecture}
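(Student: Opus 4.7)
The plan is to establish, for each of the four methods, a matching upper bound on the worst-case contraction factor that equals the quadratic lower bound already exhibited in Proposition~\ref{prop:proposition_strongly}. Since each such bound is attained by explicit univariate quadratics $f(x)=ax^2/2$ and $g(x)=bx^2/2$ with $a\in\{\rho,\alpha^{-1}\}$ and $b\in\{\mu,\beta^{-1}\}$, proving the matching upper bound immediately yields the conjecture.

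First, I would write down the PEP SDP for the two-trajectory (contraction) formulation \eqref{eq:PEP} of each method, using the standard interpolation conditions for the classes $\mathcal{F}_{\rho,\alpha^{-1}}$ and $\mathcal{F}_{\mu,\beta^{-1}}$. The primal variables encode the pairs $(f(x_i),\nabla f(x_i))$ and $(g(x_i),\nabla g(x_i))$ at the iterates of the two trajectories; for the proximal methods, one also includes the intermediate prox points characterised by the relevant subgradient relation. Then, for each method and each term in the corresponding max appearing in Proposition~\ref{prop:proposition_strongly}, I would exhibit an explicit dual multiplier assignment whose dual objective equals that term. The structure of these multipliers can be read off the quadratic extremizer itself: the interpolation inequalities that are tight on the quadratic example determine the support of the dual variables, and the remaining freedom can be fixed by matching coefficients of the quadratic form obtained by aggregating the inequalities. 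Taking the best dual certificate in each regime and combining with the quadratic lower bound would close the proof by weak SDP duality.

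The main obstacle will be the construction of these dual certificates, especially for PRS, whose worst-case rate is a product of two factors depending on $a$ and $b$. Certifying such a product structure as a nonnegative combination of interpolation inequalities requires a careful symbolic ansatz, and the four regimes $(a,b)\in\{\rho,\alpha^{-1}\}\times\{\mu,\beta^{-1}\}$ must each be certified individually, yielding a potentially long case analysis. For GM, FBS1, and FBS2 the certificates should be substantially simpler: GM on $\mathcal{F}_{\rho+\mu,\alpha^{-1}+\beta^{-1}}$ admits a well-known rank-one Lyapunov certificate, and FBS1/FBS2 can be handled by combining the classical gradient-step contraction with the firm nonexpansiveness of the prox. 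A secondary obstacle is verificational: once candidate multipliers are written down, one must check positive semidefiniteness of the resulting certificate matrix for all admissible $(\tau,\rho,\mu,\alpha,\beta)$, possibly via a Schur-complement or explicit factorisation. Finally, the strategy deliberately excludes DRS, for which Proposition~\ref{prop:proposition_strongly} leaves an unidentified regime $T_\mu$ and where the numerical evidence in Figure~\ref{fig:fig2} already suggests that the worst case is not quadratic on some interval of step sizes.
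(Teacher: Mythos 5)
The statement you are trying to prove is left as a \emph{conjecture} in the paper: the authors offer no proof, only the observation that the numerical PEP values coincide with the contraction factors attained by the univariate quadratics $f(x)=a x^2/2$, $g(x)=b x^2/2$ used in Proposition~\ref{prop:proposition_strongly}. So there is no paper proof to match, and what you propose is genuinely more than the paper establishes. Your overall strategy is the right way to turn the conjecture into a theorem: the quadratics already give the lower bound, so it suffices to prove matching upper bounds valid for all $f\in\mathcal{F}_{\rho,\alpha^{-1}}$ and $g\in\mathcal{F}_{\mu,\beta^{-1}}$, and a feasible dual point of the PEP SDP (i.e.\ an aggregation of interpolation inequalities with nonnegative weights) is a legitimate certificate of such an upper bound by weak duality.

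As written, however, the essential content is missing: you describe how you would search for the dual certificates (support read off the tight inequalities of the quadratic extremizer, coefficient matching, PSD verification), but no certificate is exhibited or verified, and for $\mu>0$ the matching upper bounds are precisely what the paper does not have --- Proposition~\ref{prop:proposition_strongly} is only a lower bound, conjectured tight on the basis of numerics. So this is a proof plan rather than a proof. Note also that the SDP machinery is heavier than necessary for these four methods, and the obstacle you single out for PRS largely evaporates if you argue by composition of single-operator contraction factors: the gradient step $x-\tau\nabla f(x)$ is $\max\{|1-\tau\rho|,|1-\tau\alpha^{-1}|\}$-Lipschitz, $\mathrm{prox}_{\tau g}$ is $(1+\tau\mu)^{-1}$-Lipschitz when $g$ is $\mu$-strongly convex, the reflected resolvent $2\,\mathrm{prox}_{\tau f}-\mathrm{Id}$ is $\max_{a\in\{\rho,\alpha^{-1}\}}\frac{|1-\tau a|}{1+\tau a}$-Lipschitz (Giselsson--Boyd-type results, of the kind underlying the $\mu=0$ bounds in \cite{briceno2023theoretical}), and GM is just a gradient step on $f+g\in\mathcal{F}_{\rho+\mu,\alpha^{-1}+\beta^{-1}}$. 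Since PRS is the composition of the two reflected resolvents and the maximum of a separable product $\frac{|1-\tau a|}{1+\tau a}\cdot\frac{|1-\tau b|}{1+\tau b}$ over $(a,b)$ factors into the product of the two maxima, these known bounds already match the quadratic lower bounds in all four cases, with no case-by-case symbolic SDP ansatz required. Either route is fine, but until the matching upper bounds for the doubly strongly convex case are actually written down and checked, the conjecture remains unproven by your text. Your exclusion of DRS is correct and consistent with the statement.
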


\section{Case $f+h\circ M$}\label{sect:Mx}
We now move to composite functions of the form \eqref{eq:problem2} involving a linear operator, whose inclusion in the PEP framework was made possible by the very recent work \cite{bousselmi2023interpolation}. Specifically, we assume that $f$ is $\alpha^{-1}$-smooth $\rho$-strongly convex, $h$ is $\gamma^{-1}$-smooth $\delta$-strongly convex, and matrix $M$ such that $||M|| \leq L$ with $\alpha,\rho,L \in ]0,\infty[$ and $\rho,\delta \in [0,\infty[$. This is a particular case of problem \eqref{eq:problem1}, therefore, the previous results still apply. However, the composite structure of $g=h\circ M$ can in principle lead to stronger results. We still consider the five same methods and want to quantify the potential gain of performance when applying a method on \eqref{eq:problem2}. Note that even if computing the proximal operators of the iterations of the method can be complicated for practical problems, it does not prevent us from easily studying them with PEP.

The only setting where an improvement could be expected is for DRS. Indeed, for the four other methods, the worst-case functions on the class of functions of the form $f+g$ are quadratic. Since, all quadratic functions $g$ can be expressed as a composite function of the form $h\circ M$, the worst-case performance cannot be improved. However, since the worst-case functions of DRS are not identified, there is room for possible improvement.  
However, as illustrated in Figure \ref{fig:fig4}, where we depict the results for DRS, there is no improvement. It indicates that there exists a worst-case function in the case $f+g$ for which $g$ is of the form $h\circ M$ (and that is not a quadratic function).

\begin{figure}
     \centering
         \includegraphics[width=\mysize\textwidth]{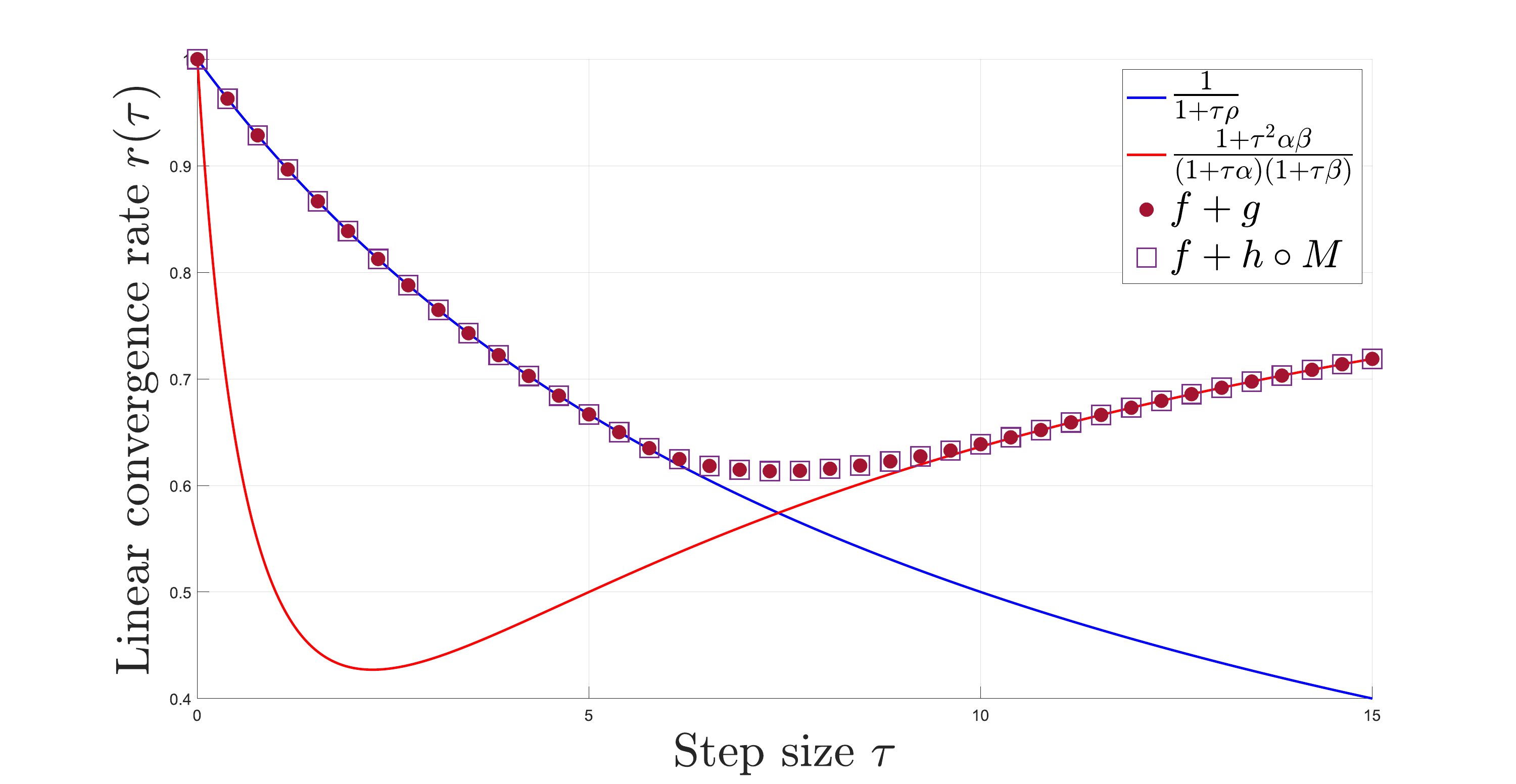}
         \setlength{\abovecaptionskip}{-7pt}
         \setlength{\belowcaptionskip}{-16pt}
        \caption{Contraction factor of DRS provided by PEP on \eqref{eq:problem1} (purple dots) and \eqref{eq:problem2} (purple squares) for $\alpha=1$, $\beta = \gamma = 5$, $\rho = 0.1$, $\mu = 0$, $\delta = 0.1$ and $ L = 1$.}
    \label{fig:fig4}
\end{figure}

\section{Primal-dual methods}
Unlike the results of \cite{briceno2023theoretical} for primal methods, no theoretical comparison for proximal primal-dual algorithms seems to be available in the literature so far. We now show how the recent extension of PEP allows us to numerically compute these rates, and therefore, compare methods. We focus on two primal-dual methods that exploit access to linear operators in their iterations:
\begin{itemize}
\item \textbf{Chambolle-Pock method (CPM)} \cite{chambolle2011first}: Let $x$, $u$, and $\tau,\sigma$ such that $\sigma \tau ||M||^2 \leq 1$:
    \begin{equation}
        \begin{cases}
            x_{+} & = \mathrm{prox}_{\tau f }(x - \tau M^T u) \\
            u_{+} & = \mathrm{prox}_{\sigma h^* }\left(u + \sigma M(2x_{+}-x)\right). 
        \end{cases}
    \end{equation}
\end{itemize}
\begin{itemize}
    \item \textbf{Condat-V{\~u} method (CVM)}\cite{condat2013primal,vu2013splitting}: Let $x$, $u$ and  $\tau,\sigma$ such that $\frac{1}{\tau}-\sigma ||M||^2 \geq \frac{1}{2\alpha}$ (where $f$ is assumed to be $\alpha^{-1}$-smooth):
    \begin{equation}
        \begin{cases}
            x_{+} & = x - \tau \nabla f(x) - \tau M^T u \\
            u_{+} & = \mathrm{prox}_{\sigma h^* }\left(u + \sigma M(2x_{+}-x)\right). 
        \end{cases}
    \end{equation}
\end{itemize}
For such primal-dual methods, we are interested in the contraction factor $r$ such that
\begin{equation}
    ||\phi_{f,g}(x,u) - \phi_{f,g}(x',u')|| \leq r ||(x,u) - (x',u')||.
\end{equation}
Similarly to Section \ref{sect:Mx}, such worst-case analysis can be formulated and obtained with PEP. Figure \ref{fig:fig5} depicts the worst-case contraction factor of CPM and CVM on problem \eqref{eq:problem2} for $\delta = 0$ and $\delta=0.1$. We set $\sigma(\tau) = \frac{1}{\tau ||M||^2}$ for CPM and $\sigma(\tau) = \frac{1}{\tau ||M||^2 } - \frac{1}{2\alpha ||M||^2}$ for CVM.
\begin{figure}
     \centering
         \includegraphics[width=\mysize\textwidth]{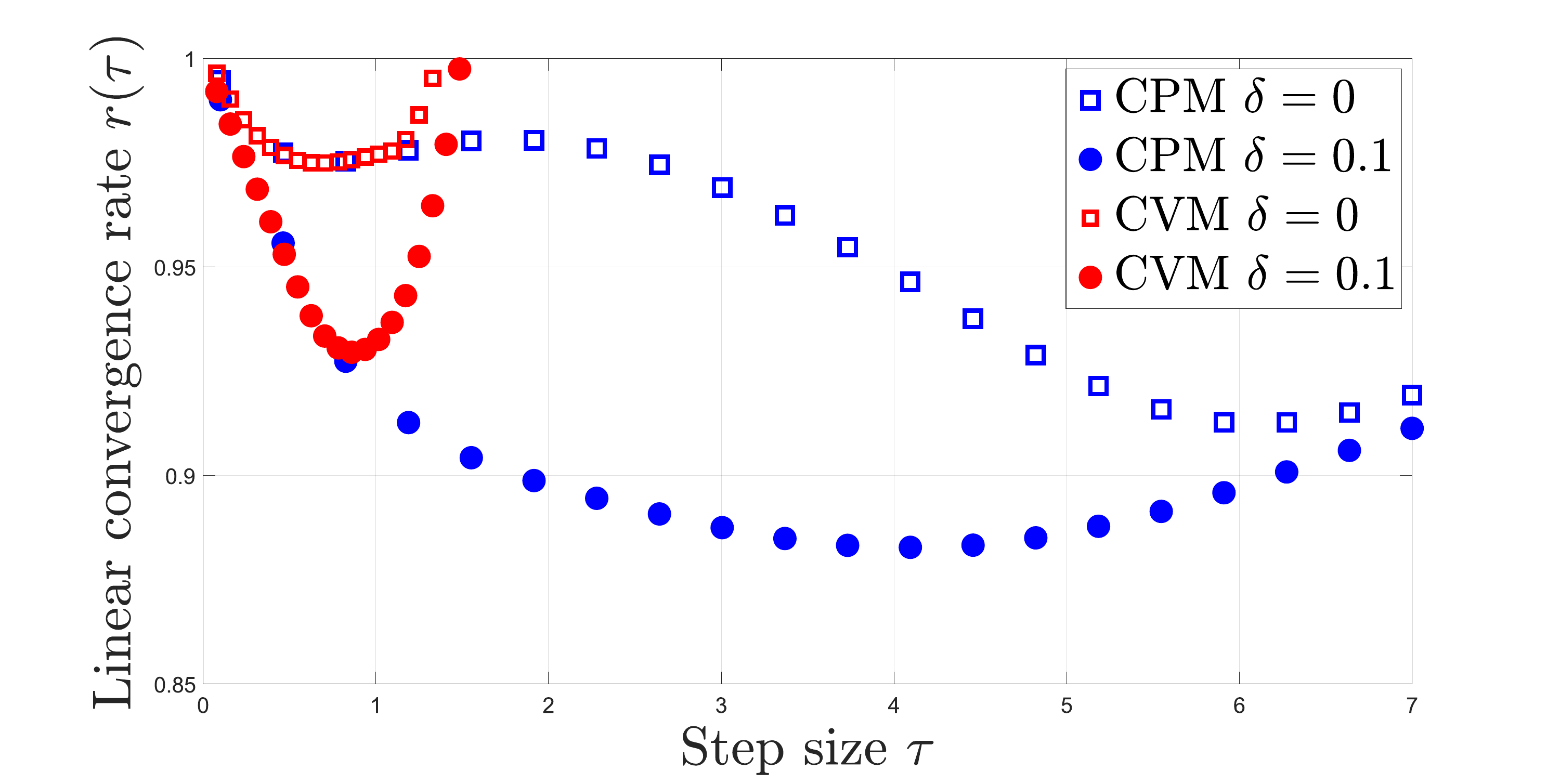}
         \setlength{\abovecaptionskip}{-7pt}
         \setlength{\belowcaptionskip}{-16pt}
        \caption{Contraction factors of CPM (blue) and CVM (red) obtained by PEP for $\alpha = 1$, $\beta=5$, $\rho=0.1$, $\delta=0$ (squares) and $\delta = 0.1$ (dots) and $||M||\leq 1$.}
    \label{fig:fig5}
\end{figure}
We can observe the impact of the strong convexity of $h$. In these two specific settings, CPM performs better.
\section{Conclusion}
We have shown how the PEP framework allows to easily certify the tightness of recent bounds on the contraction factor of the gradient method and the forward-backward, Peaceman-Rachford, and Douglas-Rachford splitting, or to improve them. Performing the same analysis on the doubly strongly convex case is equally straightforward, and again we obtain tight numerical rates. The methodology also highlights interesting and a priori unexpected phenomena, such as the existence of quadratic worst-case functions.

This type of analysis can be extended to any other method, function class, and performance criterion. We demonstrate this possibility on Chambolle-Pock and Condat-V{\~u} methods opening the way for more extended analysis in further works. \vspace{-0.6cm}

\bibliographystyle{IEEEtran} 
\bibliography{references.bib}

\end{document}